\documentclass[11pt]{amsart}

\usepackage[all,cmtip]{xy}
\usepackage{amssymb}
\usepackage[english]{babel}
\usepackage{mathrsfs}

\newtheorem{theorem}{Theorem}[section]
\newtheorem{lemma}[theorem]{Lemma}

\newtheorem{corollary}[theorem]{Corollary}

\theoremstyle{definition}
\newtheorem{definition}[theorem]{Definition}

\newtheorem{remark}[theorem]{Remark}

\newtheorem{question}[theorem]{Question}

\newtheorem{claim}{Claim}

\counterwithin{subcase}{case}

\def\N{\mathbb{N}}

\def\Z{\mathbb{Z}}
\def\R{\mathbb{R}}

\def\grp#1{\langle{#1}\rangle}
\def\ssgp{$\mathrm{SSGP}$}

\def\Z{\mathbb{Z}}
\def\Q{\mathbb{Q}}

\def\ssgp{\mathrm{SSGP}}

\def\assgp{\mathrm{ASSGP}}
\def\nss{\mathrm{NSS}}

\def\minap{\mathrm{MinAP}}

\usepackage{color}
\newenvironment{revvy}{\color{magenta}}{}
\newenvironment{revdw}{\color{blue}}{}
\newenvironment{revsg}{\color{red}}{}

\def\by{\begin{revvy}}
\def\ey{\end{revvy}}
\def\bd{\begin{revdw}}
\def\ed{\end{revdw}}
\def\bg{\begin{revsg}}
\def\eg{\end{revsg}}

\def \Zet[#1]{\lceil #1 \rceil}

\def\e#1{e}

\def\U#1{\mathbb{U}_{\leq #1}}

\DeclareMathOperator{\Iso}{Iso}

\begin{document}
\title[Generic dense free NSS subgroups]{Generic dense free subgroups of the isometry group of the Urysohn space are NSS }

\author[V.H Ya\~nez]{V\'{\i}ctor Hugo Ya\~nez}
\address{School of Mathematical Sciences and LPMC, Nankai University, Tianjin 300071, P.R. China}
\email{vhyanez@nankai.edu.cn}
\thanks{The author acknowledges the partial support of their research by the Fundamental Research Funds for the Central Universities, the National Natural Science Foundation of China (NSFC) grant 12271263, and the China Postdoctoral Science Foundation Tianjin Joint Support Program under Grant Number 2025T001TJ}

\begin{abstract}
The isometry group of the bounded Urysohn space, $G = \mathrm{Iso}(\U{1})$ is a central object in the study of Polish groups and topological dynamics. It is known that generic sequences in $G$ generate algebraically free dense subgroups. In this paper, we show that such generic free subgroups exhibit strong geometric rigidity. Specifically, we prove that for a comeager set of sequences generating dense free subgroups $F\leq G$, every non-trivial element $h\in F$ acts with maximal metric displacement, satisfying $\sup_{n\in \N} d(h^n(x),x) = 1$ for every $x \in \U{1}$. 
As a consequence, these generic subgroups satisfy the \emph{no small subgroup} ($\nss$) property. We note that the method naturally extends to the full isometry group $\mathrm{Iso}(\mathbb{U})$ of the classical Urysohn space.
\end{abstract}

\subjclass[2010]{Primary: 22A05; Secondary: 20E05,54E50,54H11}

\keywords{Urysohn space, isometry group, generic free subgroup, no small subgroups, small subgroup generating property, Polish group}

\maketitle

\section{Introduction}

The topological spaces appearing in this paper shall be assumed to be Hausdorff. A topological group $G$ is said to have \emph{no small subgroup ($\nss$)} if some neighbourhood of the identity contains no non-trivial subgroup. The $\nss$ property plays a central role in the solution of Hilbert's Fifth Problem: a locally compact group is Lie if and only if it is $\nss$. 

An opposing property is the following: a topological group $G$ is said to have the \emph{small subgroup generating property} ($\ssgp)$ if each open neighbourhood of the identity of $G$ contains a family $\mathcal{H}$ of subgroups of $G$ such that the subgroup $\grp{\bigcup \mathcal{H}}$ is dense in $G$.  If density is replaced by equality (e.g, $\grp{\bigcup \mathcal{H}} = G$ always holds), then $G$ satisfies the (stronger) \emph{algebraic} small subgroup generating property ($\assgp$). Each $\assgp$ group is naturally $\ssgp$, and an $\ssgp$ group admits no non-trivial continuous homomorphism to an $\nss$ group (see \cite{YVH_Hilbert}).

The $\ssgp$ property was introduced by Gould \cite{CG, Gould,Gould2} as a method for constructing minimally almost periodic $\minap$ groups. Since extreme amenability implies $\minap$, many large Polish groups provide natural examples of $\minap$ groups. In particular, Pestov proved that $\Iso(\mathbb{U})$ is extremely amenable. The properties are related as follows:
\begin{equation}
\assgp \rightarrow \ssgp \rightarrow \minap \leftarrow \text{extremely amenable}.
\end{equation}

Free groups are known to admit both $\nss$ and $\ssgp$ group topologies. For instance, a classical theorem of Sipacheva and Uspenskij \cite{SipachevaUspenskii}  shows that a free topological group $F(X)$ is $\nss$ if and only if the base space $X$ admits a continuous metric. On the other hand, a result of Shakhmatov and the author is the following: every non-commutative free group $F$ of infinite rank admits an $\assgp$ group topology \cite{SY_FreeSSGP}. Furthermore, the topology is metrizable whenever $F$ has exactly \emph{countably infinite} rank.

The bounded and unbounded Urysohn isometry groups provide canonical examples of universal Polish groups with rich generic structure \cite{Uspenskij1990}. A theorem of Kechris and Rosendal \cite{KR07} implies that generic sequences in these groups generate algebraically free dense subgroups. Since $\minap$ passes to dense subgroups, these free groups inherit $\minap$ subspace topologies.

This leads naturally to the following question:

\begin{question}
Are these generic free groups $\ssgp$?
\end{question}

We answer the question in the negative by proving that generic algebraically free groups exhibit strong $\nss$ behavior. The main result of this paper is the following: 

\begin{theorem} \label{this:Polish:ssgp:thm}
Let $G = \mathrm{Iso}(\U{1})$ equipped with the topology of pointwise convergence. For each $x \in \U{1}$, the set of sequences $\bar{g} = (g_i)_{i \in \N} \in G^\N$ such that every non-trivial element $h$ in the generated subgroup $F_{\bar{g}} = \grp{g_i: i \in \N}$ satisfies
\[
\sup_{n \in \Z} d(h^n(x),x) = 1
\]
is comeager in $G^\N$.
\end{theorem}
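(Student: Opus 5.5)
The plan is a Baire category argument over the countably many reduced words. Fix $x \in \U{1}$. For a reduced word $w$ in countably many letters, write $w(\bar g)$ for its evaluation in $G$. For $\epsilon>0$ rational, let
\[
U_{w,\epsilon} = \{\bar g \in G^\N : w(\bar g) = 1 \ \text{or}\ \exists n\in\Z,\ d(w(\bar g)^n(x),x) > 1-\epsilon\}.
\]
The desired set contains $\bigcap_{w,\epsilon} U_{w,\epsilon}$, since every non-trivial $h \in F_{\bar g}$ equals $w(\bar g)$ for some reduced word $w$. The Kechris--Rosendal theorem quoted in the introduction makes $\{\bar g : w(\bar g)\neq 1 \ \forall w\neq e\}$ comeager. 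It therefore suffices to show that for each non-trivial reduced $w$ and each $\epsilon$, the set $V_{w,\epsilon}=\{\bar g: \exists n,\ d(w(\bar g)^n(x),x)>1-\epsilon\}$ contains an open dense set.

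Openness is clear. For a fixed $n$, the map $\bar g\mapsto d(w(\bar g)^n(x),x)$ depends continuously on finitely many coordinates in the pointwise topology, since evaluation of an isometry at a point is jointly continuous. Hence $V_{w,\epsilon}$ is a union of open sets.

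Density is the heart of the argument. Take a basic open set: finitely many $g_1,\dots,g_k$ (with $k$ at least the number of letters in $w$), a finite set $A\subseteq \U{1}$ containing $x$, and $\delta>0$, so the neighbourhood consists of tuples $\bar g'$ with $d(g_i'(a),g_i(a))<\delta$ and $d(g_i'^{-1}(a),g_i^{-1}(a))<\delta$ for $a\in A$. We must find $\bar g'$ in this neighbourhood and some $n$, which I take to be $n=1$ when possible, with $d(w(\bar g')(x),x)>1-\epsilon$. The tool is the extension property (Katětov/ultrahomogeneity) of $\U{1}$: any partial isometry between finite subsets extends to an isometry of $\U{1}$.

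Write $w = s_m\cdots s_1$ with $s_j\in\{g_i^{\pm1}\}$, and trace the orbit $x_0=x$, $x_{j}=s_j(x_{j-1})$. Let $j_0$ be the first index at which $x_{j_0-1}$ leaves the finite set $B=A\cup\bigcup_i g_i^{\pm1}(A)$. If no such index exists, first perturb: replace $w$ by a high power $w^n$ and use that the orbit in the finite set is either periodic (so we move to the first letter application not already forced by $A$) or leaves $B$. In the worst case all letters are forced, so we shrink $\delta$'s role: the restrictions $g_i|_A$ are only approximate constraints, and we may replace $A$-values by $\delta$-close new points. Concretely, we define finite partial isometries $p_i \approx g_i|_{A}$ (exactly equal on $A$) and then extend $p_i$ along the word with fresh points. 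At each new step we choose, via one-point Katětov extensions, a new point $x_j$ at distance $1$ (the maximum allowed by the bound) from all previously placed points that are not forced by the triangle inequality. Reducedness of $w$ guarantees that each new step genuinely requires extending some $p_i$ or $p_i^{-1}$ at a point outside its current domain, so we never hit a consistency conflict. This yields finite partial isometries with $d(x_m,x)$ close to $1$; concretely, $1$ if enough letters are free, and otherwise we pass to the power $w^n$, whose cyclically reduced core eventually contributes free steps. Extending each $p_i$ to a full isometry $g_i'$ gives $\bar g'$ in the neighbourhood.

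The main obstacle is this amalgamation step. We must verify that the one-point extensions can simultaneously keep all $p_i$ isometric and push $x_m$ to distance near $1$ from $x$. The key distance constraint for a new point $y=p_i(z)$ is that $d(y,p_i(u)) = d(z,u)$ for $u\in\mathrm{dom}(p_i)$. The candidate function $f(q)=\min\bigl(1,\min_u d(z,u)+d(p_i(u),q)\bigr)$ on the other points is Katětov and gives the maximal consistent distances. The delicate case is when $z$ is close to $\mathrm{dom}(p_i)$, which would bound $d(y,x)$ away from $1$. I expect to handle this by first moving $z$ far away (distance $1$) from everything via an earlier free step, which the cyclically reduced structure of a suitable power $w^n$ makes available, and this is exactly why the supremum over $n$ appears in the statement.
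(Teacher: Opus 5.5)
Your reduction is sound: $\bigcap_{w,\epsilon}U_{w,\epsilon}$ lies in the target set, and each $V_{w,\epsilon}$ is open. (The appeal to Kechris--Rosendal is not even needed, since $V_{w,\epsilon}\subseteq U_{w,\epsilon}$.) The gap is the density of $V_{w,\epsilon}$, which is the entire content of the theorem, and the mechanism you sketch for it cannot work as described.

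Keeping $p_i=g_i|_A$ exact is fatal. Take $k=1$, $w=z_1$, $A=\{x,y\}$ with $d(x,y)=r$ small, and $g_1$ swapping $x$ and $y$. Then every $g_1'$ agreeing with $g_1$ on $A$ has $g_1'^n(x)\in\{x,y\}$ for all $n$.

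Perturbing does not rescue the ``free step'' either. For every $\bar g'$ in the neighbourhood, every point $z$ and every $a\in A$,
\[
d(g_1'(z),g_1(a))\le d(z,a)+d(g_1'(a),g_1(a))<d(z,a)+\delta .
\]
In this example $g_1(A)=A$, so each letter raises the distance to $A$ by less than $\delta$, and the $j$-th orbit point lies within $j\delta+r$ of $x$. So the ``delicate case'' you postpone is the whole difficulty, not an edge case. Your remedy, moving $z$ to distance $1$ by an earlier free step from the cyclically reduced core of $w^n$, cannot be realized. The displacement must be accumulated in increments smaller than $\delta$ over roughly $(1-\epsilon-r)/\delta$ letters; that is the real reason the supremum over $n$ is needed. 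Your step-by-step Katětov scheme gives no control of this accumulation. Reducedness only ensures the next point lies outside the current domain, not that its image is unconstrained.

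The missing ingredient is a uniform-drift realization of a long power $v=w^N$, $m=|v|$, with $\eta<\delta$ and $m\eta>1-\epsilon$. The paper gets the drift from Lemma~\ref{lem:suffix_space}: the suffixes of $v$, with the word metric scaled by $\eta$, form a geodesic along which each letter moves exactly $\eta$.

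Be aware that the paper's gluing in Lemma~\ref{lem:word_escape} has exactly the defect above. It keeps $p_i=f_i|_A$ exact and asserts that $p_i\cup E_i$ is a partial isometry of the amalgam. This fails in the swap example (with $f_1$ the swap): $d(x,s_{m+1})=\rho$, but $d(f_1(x),z_1s_{m+1})=\min(1,r+\rho+\eta)$.

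A construction that works perturbs the values on $A$ by exactly $\eta$:
\begin{enumerate}
\item Give $\U{1}\times F(Z)$ the metric $\min\bigl(1,d(p,q)+\eta\,d_Z(u,u')\bigr)$. Each $z_i$ acts isometrically on it by $(p,u)\mapsto(g_i(p),z_iu)$.
\item Take the finite set consisting of $C\times\{e\}$ with $C=A\cup\bigcup_ig_i(A)$, the points $(g_i(a),z_i)$ for $a\in A$, and the path $(s(\bar g)(x),s)$ over the suffixes $s$ of $v$.
\item Embed this set into $\U{1}$ over $C\cong C\times\{e\}$, and extend the restricted actions by ultrahomogeneity.
\end{enumerate}
The resulting $\bar g'$ satisfies $d(g_i'(a),g_i(a))=\eta<\delta$ on $A$ and $d(w(\bar g')^N(x),x)\ge\min(1,\eta|w^N|)>1-\epsilon$, which is the density you need.
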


While it is a known from generic dynamics that \emph{primitive} words act with maximal displacement, Theorem \ref{this:Polish:ssgp:thm} establishes the same conclusion simultaneously for every non-trivial word. Its proof is reserved for Section \ref{this:Polish:ssgp:thm:proof:section}.

A result of Kechris and Rosendal \cite{KR07} establishes that the set of sequences in $\mathrm{Iso}(\U{1})^\N$ generating algebraically free and topologically dense subgroups is comeager. Since the intersection of two comeager sets remains comeager, Theorem \ref{this:Polish:ssgp:thm} immediately yields the following:

\begin{corollary} \label{this:ssgp:corollary}
Comeagerly many sequences in $\mathrm{Iso}(\U{1})^\N$ generate algebraically free, dense subgroups whose non-trivial elements satisfy \[
\sup_{n \in \Z} d(h^n(x),x) = 1.
\] Consequently, the set of sequences generating $\ssgp$ subgroups is meager in $\mathrm{Iso}(\U{1})^\N$.
\end{corollary}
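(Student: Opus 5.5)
We will show that the good set is a countable intersection of dense open sets, and then apply the Baire category theorem in the Polish space $G^\N$. Fix $x\in\U{1}$. Enumerate all reduced words $w(v_1,\dots,v_k)$ in finitely many free generators; each non-trivial element of $F_{\bar g}$ has the form $w(\bar g)$ for some non-trivial reduced word $w$.

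The case where $w(\bar g)=\mathrm{id}$ must be handled. The theorem asks that every non-trivial $h$ satisfy the displacement condition. A word could evaluate to the identity without the word being trivial, but then $h$ is trivial and nothing is required. So for each non-trivial reduced word $w$ and each $m\ge 1$ we define
\[
U_{w,m}=\{\bar g\in G^\N:\ w(\bar g)=\mathrm{id}\ \text{ or }\ \exists n\in\Z,\ d(w(\bar g)^n(x),x)>1-1/m\}.
\]
This set need not be open, because $\{w(\bar g)=\mathrm{id}\}$ is closed. We instead intersect with the Kechris--Rosendal comeager set of free sequences, where $w(\bar g)\ne\mathrm{id}$ always holds. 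It then suffices to show that
\[
V_{w,m}=\{\bar g:\ \exists n,\ d(w(\bar g)^n(x),x)>1-1/m\}
\]
is open and dense for every non-trivial $w$.

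Openness is immediate. The map $\bar g\mapsto w(\bar g)^n(x)$ is continuous in the pointwise convergence topology, since it depends on finitely many coordinates and group operations are continuous, and the condition is a strict inequality.

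Density is the heart of the argument. A basic open set is determined by finitely many coordinates $g_1,\dots,g_k$ and a finite set $A\subseteq\U{1}$, containing $x$, on which these are prescribed up to $\varepsilon$. We may shrink it so that the $g_i$ are specified as partial isometries $p_i$ of a finite metric space $A$. Using the extension property and ultrahomogeneity of $\U{1}$, we must build a finite metric space $B\supseteq A$ and partial isometries $q_i\supseteq p_i$ of $B$ such that, for $n$ large, the word $w(q)^n$ is defined at $x$ and sends $x$ to a point at distance $1$, or at least $>1-1/m$, from $x$. We would then extend each $q_i$ to a full isometry of $\U{1}$ by the standard Katětov extension of partial isometries between finite subsets.

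To construct $B$, we write $w$ as a cyclically reduced word $u$ conjugated by $v$, so $w=vuv^{-1}$. Choose $n$ so large that the path of $x$ under $w^n$ leaves $A$. Concretely, we follow the letters of $w^n$ applied to $x$; whenever the current partial isometry is undefined at the current point, we adjoin a fresh point. Each fresh point is placed at distance $1$ from all previously existing points except as forced by the isometry constraints. This gives a ``free amalgamation'' in the sense of a tree-like (Cayley graph) extension. Because $u$ is cyclically reduced and non-trivial, the powers $u^n$ have reduced length growing linearly. So after finitely many steps the orbit enters a new point $y$. We then amalgamate freely: distances between new points are the maximal ones compatible with the triangle inequality via the path metric, truncated at $1$.

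The main obstacle is verifying that this free amalgam is a metric space and that the prescribed maps remain isometries. A finite orbit could close up and force $w(q)^n(x)$ back near $x$. Distances must equal $\min(1,\text{path-length})$ in a graph whose edges have the lengths $d(a,p_i(a))$. These edge lengths can be very small, so a long path could have small total length. We would handle this by assigning each new edge length $1$, i.e.\ new points are at distance $1$ from everything else, consistent with isometries because $q_i$ maps new points to new points bijectively. The map $q_i$ between two sets is then an isometry as long as it respects the old distances, and old distances involve only $A$. Since the word is reduced, the orbit never backtracks. So the point $w^n(x)$, once it has left $A$ along new points, is a new point, at distance $1$ from $x$, which gives density. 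Finally we check that the distances between new points and $A$ are well defined, with all equal to $1$; this is valid because the distance $1$ does not exceed the diameter bound and the triangle inequality holds trivially with value $1$.
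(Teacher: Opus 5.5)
Your density step has a genuine gap. After shrinking the basic open set you require $q_i\supseteq p_i$, that is, that the final isometries agree \emph{exactly} with the $f_i$ on $A$. This discards the tolerance, and with exact extensions the conclusion you need is false.

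If every $f_i$ is the identity, then every extension of the $p_i$ fixes $x\in A$, so $w(q)^n(x)=x$ for all $n$. More generally, if all $p_i$ fix some $c$ with $d(c,x)=\epsilon'$, then $d(w(q)^n(x),x)\le 2\epsilon'$ for every $n$ and every exact extension, even when the orbit of $x$ does leave $A$.

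So the step ``$|u^n|$ grows linearly, hence the orbit eventually reaches a fresh point'' is a non sequitur. Reduced length in $F(Z)$ says nothing about the partial action on $A$, which may have fixed points and cycles. Reaching a fresh point would not suffice anyway. The first fresh point is $y=q_i(a)$ with $a\in A$, so isometry forces $d(y,p_i(a'))=d(a,a')$ for all $a'\in\mathrm{dom}(p_i)$. Every later fresh point $q_j(y)$ inherits analogous constraints from its predecessor. Hence $q_i$ does not just map new points to new points (it sends the old point $a$ to $y$), new points cannot be placed at distance $1$ from everything, and nothing forces $w(q)^n(x)$ far from $x$.

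The paper's Lemma~\ref{lem:word_escape} makes the same exact-extension demand, and its $p_i\cup E_i$ is likewise not a partial isometry in general. For $f_1=\mathrm{id}$ and $w=z_1$ one has $d(x,s_{m+1})=\rho$ but $d(p_1'(x),p_1'(s_{m+1}))=d(x,s_m)=\rho+\eta$.

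The missing idea is to use the tolerance $\delta$ and \emph{perturb} the $f_i$ along a scaled Cayley graph. Set $v=w^N$ and proceed as follows.
\begin{enumerate}
\item Let $D\supseteq A$ be finite and contain $s(f)(x)$ for every suffix $s$ of $v$, put $D'=D\cup\bigcup_i f_i(D)$, and let $\Gamma\subseteq F(Z)$ be the set of suffixes of $v$.
\item Give $D'\times\Gamma$ the metric $\min\bigl(1,d(a,b)+\rho\,d_Z(\gamma,\gamma')\bigr)$ with $0<\rho<\delta$, and embed it in $\U{1}$ so that $D'\times\{e\}$ is identified with $D'$.
\item By left invariance of $d_Z$, the maps $(a,\gamma)\mapsto(f_i(a),z_i\gamma)$ are partial isometries. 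They therefore extend to $h_i\in\Iso(\U{1})$ with $d(h_i(a),f_i(a))=\rho<\delta$ on $A$.
\item Then $w(h)^N(x)$ is the point $(v(f)(x),v)$, at distance at least $\min(1,\rho|w^N|)$ from $x$. Since $|w^N|\to\infty$, this exceeds $1-1/m$ for large $N$.
\end{enumerate}

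Finally, your proposal does not address the second sentence of the corollary at all. You still need three facts. First, each such $F_{\bar g}$ is NSS, because the neighbourhood $\{g: d(g(x),x)<1/2\}$ contains no non-trivial cyclic subgroup of $F_{\bar g}$. Second, a non-trivial NSS group cannot be SSGP. Third, it follows that the SSGP-generating sequences lie in the meager complement of your comeager set.
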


The proof of the corollary is also given in Section \ref{this:Polish:ssgp:thm:proof:section}.

\section{Background and notation}

Given a non-empty set $Z$, we denote by $F(Z)$ the free group generated by $Z$. Any free group admits the structure of a metric space via the \emph{word metric}.

\begin{definition} \label{def:word_metric}
Let $F(Z)$ be the free group generated by a finite set $Z$, and let $Z^{\pm 1} = Z \cup Z^{-1}$ denote the symmetric alphabet of generators and their inverses. For any element $g \in F(Z)$, its \emph{freely reduced word length}, denoted $|g|$, is defined as the number of letters in the unique reduced word representing $g$ over $Z^{\pm 1}$.

The \emph{word metric} on $F(Z)$ is the distance function $d_Z: F(Z) \times F(Z) \to \N \cup \{0\}$ defined by
\begin{equation}
    d_Z(g, h) = |g^{-1}h| \quad \text{for all } g, h \in F(Z).
\end{equation}
\end{definition}

\begin{remark} \label{lem:left_invariance}
The word metric $d_Z$ is left-invariant. That is, for all $g, h, k \in F(Z)$, left-multiplication by $k$ acts as a bijective isometry on the metric space $(F(Z), d_Z)$, hence
$d_Z(kg, kh) = d_Z(g, h)$.
\end{remark} 

\begin{definition} \label{def:urysohn_iso}
We denote by $\U{1}$ the universal Urysohn space of diameter $\leq 1$. The group of isometries $G = \mathrm{Iso}(\U{1})$ is equipped with the \emph{topology of pointwise convergence}. A basic open neighbourhood of $h \in G$ is determined by a finite subset $A \subseteq \U{1}$ and a tolerance $\delta > 0$, defined as:
$$ V_{h, A, \delta} = \{ g \in G : d(g(a), h(a)) < \delta \text{ for all } a \in A \} $$
Under this topology, $G$ is a Polish group.
\end{definition}

Let $G$ be a Polish group. A property of countable subgroups of $G$
is said to be \emph{generic} if the set of sequences
$(g_1,g_2,\dots)\in G^\N$ generating subgroups with that property
forms a comeager subset of $G^\N$. A theorem of Kechris and Rosendal \cite{KR07} establishes a generic
freeness phenomenon for several large Polish groups, including
$\Iso(\U{1})$: the set of sequences generating algebraically free
dense subgroups is comeager in $\Iso(\U{1})^\N$.

To state this precisely, we use the following terminology:

\begin{definition} \label{def:this:replacement:map}
Let $G$ be a topological group and let $F(Z)$ be the free group over a countably infinite set $Z = \{z_i: i \in \N\}$. For each tuple $\bar{g} = (g_1,g_2,\dots ) \in G^\N$ we define the replacement homomorphism $\Psi_{\bar{g}}: F(Z) \to G$ by \begin{equation} \label{this:eval:map:def}
\Psi_{\bar g}(z_i)=g_i \qquad (i\in\N).
\end{equation}
A sequence $\bar{g} \in G^\N$ is said to generate an algebraically free subgroup
if the homomorphism $\Psi_{\bar g}$ is injective.
\end{definition}

\section{Geodesic realizations of reduced words} \label{baire:section}

\begin{lemma} \label{lem:suffix_space}
Let $Z$ be a finite set and $F(Z)$ be the free group equipped with the word metric $d_Z$. Let $v = l_1 l_2 \dots l_m$ be a reduced word of length $m$ over $Z^{\pm 1}$. For each $\eta > 0$ satisfying $m \eta < 1$, there exists a finite metric space $(B, d_B)$ with $\mathrm{diam}(B) < 1$, where the underlying set $B = \{ s_1, s_2, \dots, s_{m+1} \} \subseteq F(Z)$ 
satisfies:
\begin{enumerate}
    \item $d_B(s_1, s_{m+1}) = m\eta$.
    \item For each generator $z_i \in Z$, the restriction of the left-multiplication action to $B$, 
    \[ E_i = \{ (u, z_i u) \mid u \in B \text{ and } z_i u \in B \} \] 
    is a well-defined finite partial isometry on $B$.
    \item For any set $X$ containing $B$, if $\psi: F(Z) \to \mathrm{Sym}(X)$ is any group homomorphism such that the assigned bijection $\psi(z_i)$ extends the partial function $E_i$ for all $z_i \in Z$, then \[
    \psi(v)(s_{m+1}) = s_1.
    \]
    \item Let $x \not \in B$ and let $\delta  > 0$ satisfy $0 < \delta < 1 - m\eta$. Then $(B, d_B)$ admits a one-point extension $B' = B \cup \{x\}$ with diameter $\operatorname{diam}(B') < 1$ such that \begin{equation} \label{this:onepoint:condition}
    d_{B'}(b,x) = d_{B'}(x,b) = \delta + d_B(s_{m+1},b) \text{ for all } b \in B.
    \end{equation}
\end{enumerate}
\end{lemma}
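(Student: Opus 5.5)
We take $B$ to be the set of suffixes of $v$ and give it a scaled copy of the word metric. Set $s_k = l_k l_{k+1}\cdots l_m$ for $1\le k\le m$ and $s_{m+1} = e$, the empty word. Because $v$ is reduced, each $s_k$ is itself a reduced word of length $m+1-k$, so the $s_k$ are pairwise distinct elements of $F(Z)$. Define
\[
d_B(s_j,s_k) = \eta\, d_Z(s_j,s_k).
\]
Since $s_k = l_k l_{k+1}\cdots l_{j-1} s_j$ for $k<j$, and this product is reduced, we get $d_Z(s_j,s_k)=|j-k|$. Thus $d_B(s_j,s_k)=\eta|j-k|$, and $B$ is isometric to an arithmetic progression of step $\eta$. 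From this:
\begin{enumerate}
\item $d_B(s_1,s_{m+1})=m\eta$, which is item (1).
\item $\operatorname{diam}(B)=m\eta<1$.
\end{enumerate}

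For item (2), note that $E_i$ is the restriction to $B$ of left multiplication by $z_i$. This map is a bijection of $F(Z)$, so $E_i$ is an injective partial function. By Remark~\ref{lem:left_invariance} it preserves $d_Z$, hence it preserves $d_B$. Concretely, $E_i$ contains the pair $(s_{k+1},s_k)$ whenever $l_k=z_i$. When $l_k=z_i^{-1}$, it contains $(s_k,s_{k+1})$, since $z_i s_k = s_{k+1}$. It may also contain further pairs, but these are harmless because they too come from the isometric action.

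For item (3), take a homomorphism $\psi$ with each $\psi(z_i)$ extending $E_i$. We prove by downward induction on $k$ that $\psi(l_k l_{k+1}\cdots l_m)(s_{m+1}) = s_k$.
\begin{itemize}
\item Suppose $l_k=z_i$. Then $\psi(l_k)(s_{k+1}) = \psi(z_i)(s_{k+1}) = s_k$, because $(s_{k+1},s_k)\in E_i$.
\item Suppose $l_k=z_i^{-1}$. The relation $\psi(z_i)(s_k)=s_{k+1}$ gives $\psi(z_i)^{-1}(s_{k+1})=s_k$, and $\psi(z_i)^{-1}=\psi(l_k)$.
\end{itemize}
In both cases $\psi(l_k)(s_{k+1})=s_k$, which is the inductive step. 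At $k=1$ this gives $\psi(v)(s_{m+1})=s_1$.

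For item (4), define $d_{B'}(b,x)=\delta+d_B(s_{m+1},b)$ and check the triangle inequalities.
\begin{itemize}
\item For the triangle $(b,x,x')$ with $b,b'\in B$ in the remaining positions, we need $|d(b,x)-d(b',x)|\le d_B(b,b')$. The difference equals $|d_B(s_{m+1},b)-d_B(s_{m+1},b')|$, which is bounded by $d_B(b,b')$ by the triangle inequality in $B$.
\item We need $d_B(b,b')\le d(b,x)+d(b',x)$. This follows from $d_B(b,b')\le d_B(b,s_{m+1})+d_B(s_{m+1},b')$ and $\delta>0$.
\item Positivity holds because $\delta>0$ and $x\notin B$.
\item For the diameter, $d(b,x)\le \delta+m\eta<1$, and distances inside $B$ are at most $m\eta<1$, so $\operatorname{diam}(B')<1$.
\end{itemize}

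The only step needing care is the computation $d_Z(s_j,s_k)=|j-k|$, which relies on reducedness of $v$. It ensures both that the points of $B$ are distinct and that $\operatorname{diam}(B)=m\eta$. Everything else is routine.
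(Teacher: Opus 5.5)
Your construction is the paper's (suffixes, $\eta$-scaled word metric, left multiplication, downward induction, one-point extension). Your arguments for items (2), (3) and for the triangle inequalities in (4) are correct. The gap is precisely the step you flagged: $d_Z(s_j,s_k)=|j-k|$ is false for the metric you are using.

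Item (2) requires the left-invariant metric $d_Z(g,h)=|g^{-1}h|$. For it, $d_Z(s_j,s_k)=|s_j^{-1}s_k|=|s_j^{-1}(l_k\cdots l_{j-1})s_j|$, which is the length of a \emph{conjugate} of $l_k\cdots l_{j-1}$. What you actually computed is $|s_k s_j^{-1}|$, the right-invariant distance, and left multiplication is not an isometry for that one. For example, take $v=z_1z_2$. Then $B=\{z_1z_2,z_2,e\}$ and $d_Z(z_2,z_1z_2)=|z_2^{-1}z_1z_2|=3$, so $\operatorname{diam}(B)=3\eta$. With $\eta=2/5$ we have $m\eta=4/5<1$ but $\operatorname{diam}(B)=6/5$. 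So your $d_B$ violates $\operatorname{diam}(B)<1$, and your diameter bound for $B'$ fails with it.

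Declaring $d_B(s_j,s_k)=\eta|j-k|$ directly, instead of via $d_Z$, does not help either. For $v=z_1z_2z_1^{-1}$ one has $E_1=\{(s_3,s_4),(s_2,s_1)\}$, which would have to carry a pair at distance $\eta$ to a pair at distance $3\eta$. The paper's proof asserts the same identity, justified by the suffixes forming a Cayley-graph geodesic. That holds only for the right-invariant metric, so following the paper does not close the gap.

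The statement itself is still true, and the repair is one line: truncate, setting $d_B(u,u')=\min\{\eta\,d_Z(u,u'),\,m\eta\}$. Then:
\begin{itemize}
\item A truncation of a metric is a metric, and every $d_Z$-isometry is a $d_B$-isometry, so (2) holds.
\item (3) never uses the metric.
\item $d_Z(s_1,s_{m+1})=|v^{-1}|=m$ gives (1).
\item $\operatorname{diam}(B)\le m\eta<1$.
\item For (4), your triangle-inequality checks are valid for an arbitrary metric on $B$. Since $d_B(s_{m+1},s_k)=\eta|s_k|=\eta(m+1-k)\le m\eta$, you get $\operatorname{diam}(B')\le\delta+m\eta<1$.
\end{itemize}
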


\begin{proof}
Since $v=l_1\cdots l_m$ is reduced, the successive suffixes \[ s_r=l_r\cdots l_m \quad (1\le r\le m), \qquad s_{m+1}=e,
\] 
are pairwise distinct and form the vertices of the unique geodesic segment from $e$ to $v$ in the Cayley graph of $F(Z)$. The suffixes correspond to the successive states obtained while evaluating the word $v$ from right to left.
Let \[
B = \{ s_1, s_2, \dots, s_{m+1} \}.
\] 

We construct the metric $d_B$ for $B$ by equipping this finite set $B$ with the word metric $d_Z$, scaled by $\eta$:
$$ d_B(u, u') = \eta d_Z(u, u')  \quad \text{for all } u, u' \in B. $$
Since the suffixes form a geodesic segment in the Cayley graph, we have
\[
d_Z(s_i,s_j)=|i-j|
\]
for all \(1\le i,j\le m+1\). Hence
\[
d_B(s_i,s_j)=\eta|i-j|.
\]
Therefore, $\operatorname{diam}(B)=m\eta<1$ holds, and in particular
\[
d_B(s_1,s_{m+1})=m\eta.
\]

For each $z_i \in Z$, let $L_{z_i} : F(Z) \to F(Z)$ denote the left-multiplication bijection defined by $L_{z_i}(x) = z_i x$. 
The partial map $E_i$ is obtained by restricting $L_{z_i}$ to pairs of points lying in $B$:
\[
E_i = L_{z_i} \cap (B\times B).
\]
Viewed as a partial function on $B$, its domain is $\operatorname{dom}(E_i) = \{ u \in B \mid z_i u \in B \}$. Since $L_{z_i}$ is bijective, the subset $E_i$ of its ordered pairs is injective and a well-defined function on $\operatorname{dom}(E_i)$. By the left-invariance of the word metric (Remark \ref{lem:left_invariance}), $L_{z_i}$ is an isometry on $(F(Z),d_Z)$. We now note that for arbitrary $u, u' \in \operatorname{dom}(E_i)$:
$$ d_B(z_i u, z_i u') = \eta d_Z(z_i u, z_i u')  = \eta d_Z(u, u') = d_B(u, u') $$
Thus, $E_i$ is a finite partial isometry on the metric space $(B, d_B)$, satisfying (2).

To prove (3), let $X$ be a set containing $B$, and assume $\psi: F(Z) \to \mathrm{Sym}(X)$ is a group homomorphism such that $\psi(z_i)$ extends $E_i$ for all $z_i \in Z$. We must evaluate the composition:
$$ \psi(v)(s_{m+1}) = \left[ \psi(l_1) \circ \psi(l_2) \circ \dots \circ \psi(l_m) \right] (s_{m+1}) $$
To simplify calculations, for any $l \in Z^{\pm 1}$, we let $E_l^*$ denote the corresponding partial relation on $B$: if $l = z_i$, then $E_l^* = E_i$; if $l = z_i^{-1}$, then $E_l^* = E_i^{-1}$. Since $\psi$ extends each $E_i$, the bijection $\psi(l)$ extends $E_l^*$ for every $l\in Z^{\pm1}$. In addition, $\psi(l) \restriction_{\operatorname{dom}(E_l^*)}$ acts exactly as $E_l^*$, i.e., \textbf{left multiplication by $l$}.

\begin{claim} \label{this:reverse:path:claim}
Let $x_m = s_{m+1}$, and for each integer $0 \leq j \leq m-1$, let $x_j = \left[\psi(l_{j+1}) \circ \dots \circ \psi(l_m)\right] (s_{m+1})$. For all $0 \le j \le m$, it holds that $x_j = s_{j+1}$.
\end{claim}
\renewcommand{\qedsymbol}{$\blacksquare$}
\begin{proof}[Proof of Claim]
We proceed by finite reverse induction on $j$. 

\emph{Basis of induction:} By definition, $x_m =  s_{m+1}$. 

\emph{Inductive step:} Assume the claim holds for some $1\le j\le m$. We must show it holds for $j-1$. By definition of our sequence, we have $x_{j-1} = \psi(l_j)(x_j)$. 
By the inductive hypothesis, $x_j = s_{j+1}$, meaning we must evaluate $\psi(l_j)(s_{j+1})$.
By construction of the suffixes, $s_j = l_j s_{j+1}$. Since both $s_{j+1}$ and $s_j$ belong to $B$, the pair $(s_{j+1}, s_j)$ explicitly belongs to the relation $E_{l_j}^*$. 
Hence $s_{j+1}\in\operatorname{dom}(E_{l_j}^*)$.
Since $\psi(l_j)$ is an extension of $E_{l_j}^*$, it must map $s_{j+1}$ directly to $s_j$. 
Thus, $x_{j-1} = \psi(l_j)(s_{j+1}) = s_j$, which proves the claim for $j-1$. This completes the induction.
\end{proof}

Evaluating Claim \ref{this:reverse:path:claim} at $j=0$, the full composition evaluates to $x_0 = s_1$. Since $s_1 = l_1 \dots l_m = v$, we conclude $\psi(v)(s_{m+1}) = x_0 = s_1$, satisfying (3).

For item (4), let $x \not \in B$ and $\delta > 0$ satisfying $\delta < 1 - m\eta$ be arbitrary. Define the set $B' = B \cup \{x\}$ and let $d_{B'} : B' \times B' \to \R$ be the function which coincides with $d_{B}$ on $B \times B$, makes $d_{B'}(x,x) = 0$ hold, and satisfies \eqref{this:onepoint:condition}. By definition, $B'$ has diameter $\delta + m\eta$. It suffices to prove the following: \begin{claim}
The map $d_{B'}$ is a well-defined metric on $B'$.
\end{claim}
\begin{proof}
The function $d_{B'}$ is clearly symmetric and non-negative. Note that $d_{B'}(u,v) = 0$ holds if and only if $u = v$ holds; indeed: if $u,v \in B$ the equality follows from evaluating $d_B$. If one point is $x$ and $v \in B$ then $d_{B'}(x,v) \geq \delta > 0$ by \eqref{this:onepoint:condition}.

It then suffices to prove that the triangle inequality holds. Let $b,c \in B$ be arbitrary points. We note that \begin{align*}
d_{B'}(x,c) &= \delta + d_B(s_{m+1},c) \\
&\leq \delta + d_B(s_{m+1},b) + d_B(b,c) \\
&= d_{B'}(x,b) + d_{B'}(b,c)
\end{align*}
Similarly, \[
d_{B'}(b,c) \leq d_{B'}(b,x) + d_{B'}(x,c)
\]
follows from \[
d_{B}(b,c) \leq d_{B}(b,s_{m+1}) + d_{B}(s_{m+1},c).
\]
This proves that $d_{B'}$ is a well-defined metric on $B'$.
\end{proof}
From the previous claim, we are able to conclude the proof of item (4).
\end{proof}

\section{A lemma about escaping words in $\mathrm{Iso}(\U{1})$}

The following lemma is a standard bounded version of the free amalgamation
construction for metric spaces, as used for instance by Tent and Ziegler
\cite[Example 2.2]{TentZiegler}. We include the statement for convenience.

\begin{lemma} \label{lem:free_amalgamation}
Let $(A, d_A)$ and $(B, d_B)$ be finite metric spaces of diameter at most $1$ intersecting at a single common point $A \cap B = \{x\}$. Let $Y = A \cup B$, we define the cross-distances between the disjoint parts via the truncated maximal metric over $x$:
\begin{equation}
d_Y(a, b) = \min(1, d_A(a, x) + d_B(x, b)) \quad \text{for all } a \in A \text{ and } b \in B.
\end{equation}
Equipped with this extension and the native metrics $d_A$ and $d_B$, the space $(Y, d_Y)$ is a metric space of diameter at most $1$. Furthermore, because $\U{1}$ is the universal Urysohn space of diameter $1$, if $A$ is isometrically embedded in $\U{1}$, then $Y$ can be isometrically embedded into $\U{1}$ extending the native embeddings of $A$ and $B$.
\end{lemma}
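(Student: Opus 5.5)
The plan is to split the lemma into a purely metric part, that $(Y,d_Y)$ is a metric space of diameter at most $1$, and an embedding part, which follows from the extension property of $\U{1}$. For the metric part I will not check the truncated formula directly. Instead, first define the untruncated gluing function $\rho$ on $Y\times Y$:
\[
\rho=d_A \text{ on } A\times A,\qquad \rho=d_B \text{ on } B\times B,\qquad \rho(a,b)=\rho(b,a)=d_A(a,x)+d_B(x,b) \text{ for } a\in A,\ b\in B.
\]
This is consistent on the overlap, since $A\cap B=\{x\}$ and both formulas give the native distance whenever $a=x$ or $b=x$. Then observe that $d_Y=\min(1,\rho)$. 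On $A\times A$ and on $B\times B$ the truncation changes nothing, because $\operatorname{diam}(A),\operatorname{diam}(B)\le 1$.

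Second, show that $\rho$ is a metric (the standard wedge-sum / path metric through $x$). Symmetry is by definition. For positivity, take $a\in A\setminus\{x\}$ and $b\in B\setminus\{x\}$; then $a\ne b$ since $A\cap B=\{x\}$, and $\rho(a,b)\ge d_A(a,x)>0$. For the triangle inequality, the cases where all three points lie in $A$, or all in $B$, are inherited. By the symmetry between $A$ and $B$, the only remaining configurations are two points $a,a'\in A$ and one point $b\in B$, arranged in two ways.
\begin{itemize}
\item For $\rho(a,b)\le\rho(a,a')+\rho(a',b)$, use $d_A(a,x)\le d_A(a,a')+d_A(a',x)$ and add $d_B(x,b)$ to both sides.
\item For $\rho(a,a')\le\rho(a,b)+\rho(b,a')$, use $d_A(a,a')\le d_A(a,x)+d_A(x,a')$ together with $d_B(x,b)\ge 0$.
\end{itemize}
This case analysis is the only place where actual checking happens. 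The main risk is overlooking a configuration, so I will organize it by how many of the three points lie in $A$ versus $B$.

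Third, use the elementary fact that $\min(1,\rho)$ is a metric whenever $\rho$ is. If one of $\rho(p,q)$, $\rho(q,r)$ is at least $1$, the truncated right-hand side is at least $1\ge\min(1,\rho(p,r))$. Otherwise the right-hand side equals $\rho(p,q)+\rho(q,r)\ge\rho(p,r)\ge\min(1,\rho(p,r))$. Positivity is preserved, and the diameter is at most $1$ by construction. Since $d_Y$ restricts to $d_A$ and $d_B$, the inclusions $A\hookrightarrow Y$ and $B\hookrightarrow Y$ are isometric.

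Finally, for the embedding, fix an isometric embedding $\iota:A\to\U{1}$ and enumerate $B\setminus\{x\}=\{b_1,\dots,b_k\}$. Apply the one-point extension property of $\U{1}$ $k$ times: every finite metric space of diameter at most $1$ that is a one-point extension of a finite subspace of $\U{1}$ is realized inside $\U{1}$ over it. At stage $j$, the space $A\cup\{b_1,\dots,b_j\}$ with metric $d_Y$ is a one-point extension of the already embedded $A\cup\{b_1,\dots,b_{j-1}\}$. This produces an isometric embedding of $Y$ that extends $\iota$. Its restriction to $B$ is an isometric embedding of $B$ that agrees with $\iota$ at $x$; this is the sense in which it extends the native embedding of $B$.
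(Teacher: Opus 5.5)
Your proof is correct. The paper does not actually prove this lemma. It presents it as the standard bounded free amalgamation and defers to Tent and Ziegler \cite{TentZiegler}, so your proposal supplies the verification the paper omits rather than taking a competing route.

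Your argument is the elementary content behind that citation: you glue the untruncated path metric $\rho$ through $x$, check the two genuinely mixed configurations of the triangle inequality, and observe that $\min(1,\rho)$ is again a metric. The citation buys brevity and places the construction inside Tent and Ziegler's free-amalgamation framework, while your version makes the lemma self-contained.

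Your embedding step realizes $B\setminus\{x\}$ one point at a time over $\iota(A)$ using the one-point extension property of $\U{1}$. This is exactly what is needed in the proof of Lemma~\ref{lem:word_escape}, where only $A$ sits inside $\U{1}$ and the embedding of $Y$ must extend the identity on $A$.

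Your reading of ``extending the native embedding of $B$'' is an isometric copy of $B$ that agrees with $\iota$ at $x$. It is also the only reading under which the statement is true. If $A$ and $B$ were both already placed in $\U{1}$, the distances between their images would be fixed and need not equal $\min(1,d_A(a,x)+d_B(x,b))$, so no common extension need exist.
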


\begin{lemma} \label{lem:word_escape}
Let $F(Z)$ be the free group over a finite set $Z = \{z_1, \dots, z_k\}$. Let $w \in F(Z) \setminus \{e\}$ be a non-trivial reduced word. Let $U \subseteq \mathrm{Iso}(\U{1})^k$ be a non-empty open set in the product topology of pointwise convergence. For any base point $x \in \U{1}$ and any $0 < \epsilon < 1$, there exists a tuple of isometries $H = (h_1, \dots, h_k) \in \mathrm{Iso}(\U{1})^k$ and an integer $N \ge 1$ such that the evaluation homomorphism $\psi_H: F(Z) \to \mathrm{Iso}(\U{1})$ (defined by $\psi_H(z_i) = h_i$) satisfies \[ 
d(\psi_H(w)^N(x), x) > \epsilon.
\]
\end{lemma}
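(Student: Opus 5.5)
The plan is to build the tuple $H$ by hand on a finite configuration and then extend it to global isometries. I will also aim to arrange $H\in U$, which is what the later Baire-category argument needs. First shrink $U$ to a basic open set. It is determined by a finite set $A\subseteq\U{1}$ with $x\in A$, a tolerance, and a tuple $g=(g_1,\dots,g_k)\in U$. Let $p_i=g_i\restriction A$, enlarging $A$ to $A\cup\bigcup_i g_i(A)$ so that each $p_i$ is a finite partial isometry of $A$. Any $h_i\in\mathrm{Iso}(\U{1})$ extending $p_i$ lies in the neighbourhood.

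Next write $w=u\,c\,u^{-1}$ in reduced form, with $c$ cyclically reduced and non-trivial. Then for every $N$ the word $w^N=u\,c^N u^{-1}$ is reduced, of length $m=2|u|+N|c|$. Fix $\epsilon<1$. Choose $N$ large and $\eta>0$ so that $m\eta<1$ but $m\eta>\epsilon+3\delta$, with $\delta>0$ tiny. Apply Lemma \ref{lem:suffix_space} to $v=w^N$ to get the geodesic space $B=\{s_1,\dots,s_{m+1}\}$ with its partial isometries $E_i$. By item (3), any homomorphism whose generators extend the $E_i$ sends $s_{m+1}$ to $s_1$ under $w^N$. Item (4) gives a one-point extension $B'=B\cup\{x\}$ with $d(x,s_{m+1})=\delta$. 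Then glue $A$ and $B'$ along the single point $x$, using Lemma \ref{lem:free_amalgamation}, to get $Y=A\cup B$ embedded in $\U{1}$ over the given copy of $A$.

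The key step is to check that $p_i\cup E_i$ is a partial isometry of $Y$. Once this holds, ultrahomogeneity of $\U{1}$ (finite partial isometries extend to global isometries) gives $h_i\supseteq p_i\cup E_i$, so $H\in U$. Item (3) then yields $\psi_H(w)^N(s_{m+1})=s_1$. The estimate follows from the triangle inequality and the fact that $\psi_H(w)^N$ is an isometry:
\[
d(\psi_H(w)^N(x),x)\ \ge\ d(s_1,s_{m+1})-2\delta.
\]
Since the free amalgam is geodesic through $x$ below the truncation level, $d(s_1,s_{m+1})=m\eta$, so the right-hand side is at least $m\eta-2\delta>\epsilon$.

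The main obstacle is exactly this compatibility check. A mixed pair $a\in\operatorname{dom}p_i$, $b\in\operatorname{dom}E_i$ needs $d(p_i a,z_ib)=d(a,b)$. In the free amalgam the left side is $\min(1,d(p_ia,x)+\delta+d_B(s_{m+1},z_ib))$, and this need not equal the right side because $p_i$ moves $x$.

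My first attempt at a fix is to stop anchoring $B$ at $x$ itself. Instead I would anchor it at a fresh point $y\notin A$ with $d(x,y)=\delta$. I would then arrange the domains of the $E_i$ to avoid the endpoint where conflicts occur, by padding $v$ on the right with a long extra reduced prefix. All of $B$ except $s_{m+1}$ would then lie at truncated distance $1$ from $A$, and cross distances become constant, which makes mixed pairs automatically compatible.

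If this fails, the fallback is to amalgamate $B'$ not with $A$ but with the $\langle p_i\rangle$-saturated finite space $A$ at truncation level $1$. I would then check directly that $\min(1,\cdot)$ absorbs the discrepancy once $\delta+\eta$ is smaller than the minimal positive displacement $d(p_ia,a)$ occurring in $A$.
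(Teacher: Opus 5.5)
There is a genuine gap at exactly the step you flag, and neither repair closes it. The mismatch is not caused by $p_i$ moving $x$. If $p_i(x)=x$, compatibility at $a=x$ already requires $d(x,z_ib)=d(x,b)$, i.e.\ $|z_ib|=|b|$ (no truncation occurs, since $\operatorname{diam}B'<1$), and this never holds in a free group.

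More seriously, no scheme in which $h_i$ extends the centre $g_i\restriction A$ exactly, with $x\in A$, can work. Take $k=1$, $w=z_1$ and $U=V_{\mathrm{id},A,\tau}$. Every $h_1\supseteq \mathrm{id}\restriction A$ fixes $x$, so $\psi_H(w)^N(x)=x$ for all $N$, whatever $B$ you glue in.

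Your first repair is also inconsistent. Your final estimate needs $d(x,s_{m+1})$ small, so $d(x,s_m)\le d(x,s_{m+1})+\eta<1$, and $B\setminus\{s_{m+1}\}$ cannot sit at distance $1$ from $A$. Your fallback makes matters worse. If the $p_i$ permuted a finite set $A\ni x$, every extension would leave $A$ invariant. The $\psi_H(w)$-orbit of $x$ would then be computed inside $A$ by the $p_i$ alone, independently of $B$. Points of displacement $0$ are not covered by your condition anyway.

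For comparison, the paper runs the same construction (its $\rho$ is your $\delta$). It simply asserts that $p_i\cup E_i$ is a partial isometry because the domains lie in the disjoint sets $A$ and $B$. Disjointness only makes the union a function, and the example above shows the assertion is false, so the paper does not supply this step either. The statement as written does not demand $H\in U$; for that weaker claim your construction with $A=\{x\}$ and no $p_i$ suffices. However, Claim~\ref{this:empty:interior:claim} needs $H\in W$.

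The missing idea is to spend the tolerance $\tau$ instead of extending $g_i\restriction A$ exactly. One way to do this is as follows.
\begin{itemize}
\item Fix $0<\eta<\tau$ and $N$ with $\eta m>\epsilon$, where $v=w^N=l_1\cdots l_m$ has suffixes $s_1,\dots,s_{m+1}=e$.
\item Put $c_j=\psi_g(s_j)(x)$, where $\psi_g(z_i)=g_i$. Then $c_{m+1}=x$ and $c_j=\psi_g(l_j)(c_{j+1})$.
\item Take a finite $C\supseteq A\cup\bigcup_i g_i(A)\cup\{c_1,\dots,c_{m+1}\}$ and $T=\{e,z_1,\dots,z_k,s_1,\dots,s_m\}$.
\item Give $C\times T$ the metric $\min(1,d(c,c')+\eta\,d_Z(u,u'))$, and embed it in $\U{1}$ over $C\equiv C\times\{e\}$.
\item By left invariance of $d_Z$, the relation $q_i=\{((c,u),(g_ic,z_iu)) : c,g_ic\in C,\ u,z_iu\in T\}$ is a partial isometry. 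Extend it to $h_i$.
\end{itemize}
For $a\in A$ you get $d(h_i(a),g_i(a))=d((g_ia,z_i),(g_ia,e))=\eta<\tau$, so $H\in U$. The induction of item (3), run on these pairs, gives $\psi_H(v)(x)=(c_1,v)$. Hence $d(\psi_H(w)^N(x),x)=\min(1,d(c_1,x)+\eta m)>\epsilon$.
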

\begin{proof}
Since $U$ is open, it contains a basic open set. Without loss of generality, assume $U = U_1 \times \dots \times U_k$, where each $U_i = V[f_i, A_i, \delta_i]$ is open in $\text{Iso}(\U{1})$. To ensure our arbitrary base point $x \in \U{1}$ is included, we define the master finite domain $A = \{x\} \cup \bigcup_{i=1}^k A_i$ and set $\delta = \min\{\delta_i : i =1,\dots,k\}$. Without loss of generality, we may select our open sets to be $U'_i = V[f_i, A, \delta] \subseteq U_i$, replacing $U_i$ with $U'_i$ so they share the common domain $A$ containing $x$. For each $i = 1, \dots, k$, we define the finite partial isometry $p_i = f_i|_A : A \to \U{1}$ with domain $A$.

Since $w$ is a non-trivial word, the length of the reduced word $w^N$ is arbitrarily large as $N$ tends to infinity. Select a large enough positive integer $N$ and a constant $\eta > 0$ such that the length $m = |w^N|$ satisfies $\epsilon < m \eta < 1$. We let $v = w^N$. 

We now select the finite metric space $(B, d_B)$ from Lemma \ref{lem:suffix_space} for the word $v$ and $\eta$. Following the lemma, we let $B = \{ s_1, s_2, \dots, s_{m+1} \}$ be the underlying set, and $\{E_i: 1 \leq i \leq k\}$ be the set of partial isometries on $B$. 

Since  $\epsilon < m \eta < 1$ holds, select $\rho >0$ satisfying $\rho < 1 - m \eta$. Consider the amalgamation space $B' = B \cup \{x\}$ constructed in item (iv) of Lemma \ref{lem:suffix_space} for this selection of $\rho$. Consider the  amalgamation space $Y = A \cup B'$ over the common point $x$ as given in Lemma \ref{lem:free_amalgamation}. Recall that the cross distances between $A$ and $B'$ are given by:
\begin{equation}
d_Y(a, b) = \min(1, d_A(a, x) + d_{B'}(x, b)) \quad \text{for all } a \in A \text{ and } b \in B'.
\end{equation}
Since $Y$ is a finite metric space of diameter at most $1$ and
$A \subseteq \U{1}$, the universality of $\U{1}$ yields an
isometric embedding of $Y$ into $\U{1}$ extending the identity
embedding of $A$. Henceforth, we identify $Y$ with its isometric image inside $\U{1}$. 

For each $i=1,\dots,k$, define a partial map
$p'_i = p_i \cup E_i$ on $Y$. Since $\mathrm{dom}(E_i) \subseteq B$, $\mathrm{dom}(p_i) \subseteq A$ and $A \cap B = \emptyset$ hold, the union $p'_i$ is a well-defined partial isometry on $Y$. By ultrahomogeneity of $\U{1}$ we may now select a family of isometries $h_1,\dots,h_k \in \text{Iso}(\U{1})$  such that $h_i$ is an extension of $p'_i$ for all $i=1,\dots,k$. Since $h_i$ coincides with $p_i = f_i \restriction_A$ on the set $A$, we now have \[
H = (h_1,\dots,h_k) \in U_1 \times \dots \times U_k = U. 
\]
Let us now take the (unique) homomorphism $\psi_H: F(Z) \to \text{Iso}(\U{1})$ which maps $\psi_H(z_i) = h_i$ on the set of generators $Z$. Since $\psi_H(z_i) = h_i$ is an extension of $E_i$ (by construction), we now apply item (3) of Lemma \ref{lem:suffix_space} to deduce \[
\psi_H(v)(s_{m+1}) = s_1.
\]
Since $d_Y(x,s_{m+1}) = \rho$ and $\psi_H(v)$ is an isometry, we deduce that \[
d(\psi_H(v)(x),s_1) = d(\psi_H(v)(x),\psi_H(v)(s_{m+1})) = d(x,s_{m+1}) = d_Y(x,s_{m+1}) = \rho.
\]
Furthermore, the reverse triangle inequality implies \[
d(\psi_H(v)(x),x) \geq  d(s_1,x) -  d(\psi_H(v)(x),s_1) = d_{B'}(s_1,x) - \rho.
\]
By item (4) of Lemma \ref{lem:suffix_space} know that $d_{B'}(s_1,x) = \rho + d_B(s_{m+1},s_1)$. Applying the above inequality we obtain \[
d(\psi_H(v)(x),x) \geq d_B(s_{m+1},s_1) = m\eta > \epsilon.
\]
as desired. Since $\psi_H$ is a homomorphism and $v = w^N$ it suffices to note that $d(\psi_H(v)(x),x) = d(\psi_H(w)^N(x),x)$ holds. This concludes our proof.
\end{proof}

\section{Proof of the main result} \label{this:Polish:ssgp:thm:proof:section}

\subsection{Proof of Theorem \ref{this:Polish:ssgp:thm}}

Fix arbitrary $x \in \U{1}$ and a rational $q \in (0,1) \cap \Q$. Consider the basic open set \[U_{x, q} = \{ g \in G : d(g(x), x) < q \}.\] 
Along with it, we define the closed set \[
S_{x, q} = \{ g \in G : d(g^n(x), x) \le q \text{ for all } n \in \N \}.
\] 
Let $F(Z)$ be a free group over a countably infinite set $Z = \{z_i: i \in \N\}$. Let $w \in F(Z)$ be a non-trivial reduced word. For each sequence $\bar{g} \in G^\N$ let $\Psi_{\bar{g}}: F(Z) \to G$ denote the replacement homomorphism satisfying \eqref{this:eval:map:def}. Write $w = l^{\varepsilon_1}_1 l^{\varepsilon_2}_2 \cdots l^{\varepsilon_m}_m$ in reduced form, for some $l_1,\dots,l_m \in Z$ and $\varepsilon_1,\dots,\varepsilon_m \in \{-1,1\}$. We now define the \emph{evaluation map} $\Phi_w: G^\N \to G$ of $w$ as 
\begin{equation} \label{this:bigPhi:eval} 
\Phi_w(\bar{g}) =\Psi_{\bar{g}}(w) = \Psi_{\bar{g}}(l_1)^{\varepsilon_1} \Psi_{\bar{g}}(l_2)^{\varepsilon_2} \cdots \Psi_{\bar{g}}(l_m)^{\varepsilon_m} \in \grp{\{g_i: i \in \N\}}.
\end{equation}

Since multiplication and inversion are continuous in $G$,
the evaluation map $\Phi_w$ is continuous, and the pre-image $\Phi_w^{-1}(S_{x, q})$ of $S_{x,q}$ under $\Phi_w$ is closed in $G^\mathbb{N}$.
\begin{claim} \label{this:empty:interior:claim}
The set $\mathcal{M}_{x,q,w} = \Phi_w^{-1}(S_{x, q})$ has empty interior.
\end{claim}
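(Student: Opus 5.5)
To show that $\mathcal{M}_{x,q,w}$ has empty interior, I will take an arbitrary non-empty open set $W \subseteq G^\N$ and find a point of $W$ lying outside $\Phi_w^{-1}(S_{x,q})$. Shrinking $W$, I may assume it is a basic open set. Such a set constrains only finitely many coordinates, say $g_1,\dots,g_K$, and leaves all other coordinates free. Enlarging $K$ if necessary, I also assume that every generator occurring in the reduced word $w$ lies among $z_1,\dots,z_K$. I then write
\[
W = U \times \prod_{i>K} G \qquad \text{with } U \subseteq G^K \text{ open and non-empty.}
\]
This lets me regard $w$ as a non-trivial reduced word in the finite free group $F(\{z_1,\dots,z_K\})$, which embeds naturally in $F(Z)$.

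Next I apply Lemma \ref{lem:word_escape} with $Z=\{z_1,\dots,z_K\}$, the open set $U$, the base point $x$ and the tolerance $\epsilon = q \in (0,1)$. This yields a tuple $H=(h_1,\dots,h_K)$ and an integer $N\ge 1$ with
\[
d(\psi_H(w)^N(x),x) > q.
\]
The lemma's statement does not say explicitly that $H \in U$, but its proof constructs the $h_i$ to extend the partial isometries $f_i\restriction_A$, so $H\in U$. I will record this membership, $H \in U$, explicitly when invoking the lemma, since it is exactly what is needed here.

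I then extend $H$ to a full sequence $\bar g\in G^\N$ by setting $g_i = h_i$ for $i\le K$ and choosing $g_i$ arbitrarily (for instance the identity) for $i>K$. Then $\bar g \in W$. Since $w$ involves only the letters $z_1,\dots,z_K$, we have $\Phi_w(\bar g)=\Psi_{\bar g}(w)=\psi_H(w)$, and therefore $d(\Phi_w(\bar g)^N(x),x)>q$. This means $\Phi_w(\bar g)\notin S_{x,q}$, so $\bar g \in W\setminus \mathcal{M}_{x,q,w}$. As $W$ was arbitrary, $\mathcal{M}_{x,q,w}$ has empty interior.

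The main obstacle is the bookkeeping in the reduction to finitely many coordinates. One must check that the evaluation of $w$ depends only on the coordinates of the letters occurring in $w$, and that the finitely many coordinates restricted by $W$ can be merged with the letters of $w$ into one finite index set to which Lemma \ref{lem:word_escape} applies. The only substantive point is the membership $H\in U$, which is implicit in the lemma's proof rather than its statement.
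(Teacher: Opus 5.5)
\textbf{Gap inherited from Lemma~\ref{lem:word_escape}.} Your reduction matches the paper's proof of this claim: merge the finite support of the basic open set with the letters of $w$, apply Lemma~\ref{lem:word_escape} with $\epsilon=q$, and pad the tuple arbitrarily. You are also right that the statement of Lemma~\ref{lem:word_escape} omits $H\in U$, and the paper uses this silently when it writes ``select a finite tuple $H\in W$''. The gap is in how you close that point. The proof of Lemma~\ref{lem:word_escape} does not yield isometries $h_i$ extending $f_i\restriction_A$ exactly, and in general no construction can, since $x\in A$. For instance, take $k=1$, $w=z_1$ and $f_1=\mathrm{id}$: every $h_1$ extending $f_1\restriction_A$ fixes $x$, so $h_1^N(x)=x$ for all $N$.

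In the paper's construction, the failing step is the claim that $p_i\cup E_i$ is a partial isometry. Suppose the last letter of $w^N$ is $z_i$. Then this map sends $x\mapsto f_i(x)$ and $s_{m+1}\mapsto s_m$, so it requires $d(f_i(x),s_m)=d_Y(x,s_{m+1})=\rho$. But if $f_i(x)=x$, then $d_Y(x,s_m)=\rho+\eta$. More generally, the distances from $f_i(A)$ to $B$ are not controlled at all, since only $A$ was placed in $Y$. So both your proof and the paper's rest on an unproved strengthened form of the lemma.

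\textbf{A repair.} The strengthened statement is true, but $H\in U$ must come from \emph{approximate} extension. Shrink $U$ to $\prod_i V_{f_i,A,\delta}$ with $x\in A$. Set $A^+=A\cup\bigcup_i f_i(A)$ and $\phi=\psi_{(f_1,\dots,f_k)}$. Give $\U{1}\times F(Z)$ the metric $D((u,g),(u',g'))=\min(1,\,d(u,u')+\eta\, d_Z(g,g'))$ with $0<\eta<\delta$. The diagonal action $\gamma\cdot(u,g)=(\phi(\gamma)u,\gamma g)$ is isometric by Remark~\ref{lem:left_invariance}, and $u\mapsto(u,e)$ is an isometric embedding.

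Pick $N$ with $\eta|w^N|>\epsilon$ and write $w^N=l_1\cdots l_m$. Let $Q$ be the finite set consisting of $A^+\times\{e\}$, the points $(f_i(a),z_i)$ for $a\in A$, and the points $(l_{j+1}\cdots l_m)\cdot(x,e)$ for $0\le j\le m$. Embed $Q$ isometrically into $\U{1}$ with $(u,e)\mapsto u$ for $u\in A^+$. By ultrahomogeneity, extend each restricted generator action $\{(q,z_i\cdot q): q,z_i\cdot q\in Q\}$ to some $h_i\in G$. Then $d(h_i(a),f_i(a))=D((f_i(a),z_i),(f_i(a),e))\le\eta<\delta$, so $H\in U$. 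The argument of Lemma~\ref{lem:suffix_space}(3) then gives $d(\psi_H(w)^N(x),x)=\min(1,\,d(\phi(w^N)x,x)+\eta|w^N|)>\epsilon$. With this version of the lemma, your argument goes through unchanged.
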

\begin{proof}
By contradiction, assume $\mathcal{M}_{x,q,w}$ contains a non-empty basic open set $U \subseteq  G^\mathbb{N}$. Then, $U$ is supported on a finite set of coordinates $I \subseteq \N$. Let
\[
J=\{j\in\N : z_j \text{ appears in } w\}.
\]
We then take the set $K = I \cup J$ and select the subset of letters $Z_K = \{z_k : k \in K\} \subseteq Z$. Let $W \subseteq G^K$ be the projection of $U$ onto the $K$-coordinates. By construction we have \[
W \times G^{\mathbb{N} \setminus K} \subseteq U.
\]

For any finite tuple $H \in W$, we let $\psi_H: F(Z_K) \to G$ be the finite 
replacement homomorphism 
which maps $z_k \mapsto h_k$ for each $k \in K$. Let $\bar{g} \in U$ be any infinite sequence whose projection onto the $K$-coordinates is exactly $H$ (i.e., any sequence in $H \times G^{\mathbb{N} \setminus K}$). Since $w$ only uses letters in $Z_K$, and $\bar{g}$ agrees with $H$ on the coordinates indexed by $K$ we have:
\[
\Phi_w(\bar g)=\Psi_{\bar g}(w)=\psi_H(w)\in S_{x,q}.
\]
Since $\bar{g} \in H \times G^{\mathbb{N} \setminus K}$ was arbitrary, we have proven the following: for any tuple $H \in W$ the inclusion $\psi_H(w) \in S_{x,q}$ holds. We now apply Lemma \ref{lem:word_escape} to select a finite tuple $H \in W$ and an integer $N$ such that $d(\psi_H(w)^N(x), x) > q$. Finally, select an arbitrary element $\bar{g}$ such that $\bar{g} |_K = H$ to obtain an element $\bar{g} \in U$ which satisfies \[
d(\Phi_w(\bar{g})^N(x), x) > q.
\]
This implies that $\Phi_w(\bar{g}) \not \in S_{x,q}$ and hence $\bar{g} \not \in \mathcal{M}_{x,q,w}$, which is a contradiction. We conclude that $\Phi_w^{-1}(S_{x, q})$ contains no open sets.
\end{proof}
By Claim \ref{this:empty:interior:claim}, for every non-trivial reduced word $w \in F(Z) \setminus \{e\}$, the set $\mathcal{M}_{x,q,w}$ is closed and nowhere dense in $G^\mathbb{N}$. Since $Z$ is countable, the free group $F(Z)$ is countable, and thus the union 
\[ \mathcal{M}_x = \bigcup_{q \in \Q \cap (0,1)} \bigcup_{w \neq e} \mathcal{M}_{x,q,w} \] 
is a meager subset of $G^\N$. Hence
\[
\mathcal C_x = G^\N\setminus \mathcal M_x
\]
is comeager in $G^\N$.

\begin{claim}
Let $\bar{g} \in \mathcal{C}_x$. If $h \in F_{\bar{g}} = \grp{g_i: i \in \N}$ is arbitrary and non-trivial, then \[
\sup_{n \in \N} d(h^n(x),x) = 1.
\]
\end{claim}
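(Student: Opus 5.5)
Every non-trivial $h\in F_{\bar g}$ is the image of some non-trivial reduced word, and membership of $\bar g$ in $\mathcal C_x$ excludes $\bar g$ from every $\mathcal M_{x,q,w}$. Since $h\in F_{\bar g}=\Psi_{\bar g}(F(Z))$ and $h\neq e$, we choose $w\in F(Z)$ with $\Psi_{\bar g}(w)=h$. Necessarily $w\neq e$, because $\Psi_{\bar g}(e)$ is the identity of $G$. Passing to the freely reduced form does not change the element, so $w$ is a non-trivial reduced word and $\Phi_w(\bar g)=\Psi_{\bar g}(w)=h$ by \eqref{this:bigPhi:eval}. This step needs no freeness of $F_{\bar g}$: we only use that some preimage of $h$ exists, which is automatically non-trivial.

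Next we argue by contradiction. Since $\U{1}$ has diameter at most $1$, the supremum $s=\sup_{n\in\N} d(h^n(x),x)$ satisfies $s\le 1$. Suppose $s<1$. By density of $\Q$ we pick a rational $q$ with $s<q<1$, and $q>0$ because $s\ge 0$. Then $d(h^n(x),x)\le s\le q$ for all $n\in\N$, so $h\in S_{x,q}$. Hence $\bar g\in\Phi_w^{-1}(S_{x,q})=\mathcal M_{x,q,w}\subseteq\mathcal M_x$, contradicting $\bar g\in\mathcal C_x=G^\N\setminus\mathcal M_x$. Therefore $s=1$.

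Finally, the theorem is stated with the supremum over $n\in\Z$. Because $h$ is an isometry, $d(h^{-n}(x),x)=d(x,h^n(x))$, so the supremum over $\Z$ equals the supremum over $\N$ (the term $n=0$ contributes $0$). Thus the claim also yields the statement in the form given in Theorem~\ref{this:Polish:ssgp:thm}.

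The only delicate point, and it is minor, is making sure $q$ can be taken rational and strictly between $s$ and $1$; this is exactly why $\mathcal M_x$ was defined as a union over $q\in\Q\cap(0,1)$. The substantive work, the nowhere-density of each $\mathcal M_{x,q,w}$, has already been done in Claim~\ref{this:empty:interior:claim} using Lemma~\ref{lem:word_escape}, so this final step is purely a bookkeeping argument.
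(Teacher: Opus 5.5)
Your proof is correct and is essentially the paper's argument: you take a non-trivial word $w$ with $\Phi_w(\bar g)=h$, use $\bar g\notin\mathcal M_{x,q,w}$ for rationals $q<1$ arbitrarily close to $1$ (packaged as a contradiction with $s<q<1$), and conclude from the bound $d\le 1$. Your remark that the suprema over $\N$ and $\Z$ coincide, because $d(h^{-n}(x),x)=d(x,h^n(x))$, correctly tidies up the paper's mixing of $n\in\N$ and $n\in\Z$.
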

\begin{proof}
Let $h \in  F_{\bar{g}} \setminus \{e\}$ be arbitrary. Then $h = \Phi_w(\bar{g})$ for some non-trivial word $w$. Fix a rational $q \in \Q \cap (0,1)$. Since $\bar{g} \not \in \mathcal{M}_{x,q,w}$, we have $ \Phi_w(\bar{g}) \not \in S_{x,q}$. This implies that there exists $n \in \Z$ such that \[
d(h^n(x),x) > q.
\]
Since the metric on $\U{1}$ is bounded by $1$, the previous inequality for any choice of $q < 1$ implies that $\sup_{n \in \Z} d(h^n(x),x) = 1$.
\end{proof}
To conclude our proof, we let $\mathcal{F} \subseteq G^{\N}$ be the set of sequences which generate dense and algebraically free subgroups of $G$. The intersection $\mathcal{G}_x = \mathcal{F} \cap \mathcal{C}_x$ is then comeager in $G^\N$, and its sequences satisfy items (a) and (b), as desired.   
\qed

\subsection{Proof of Corollary \ref{this:ssgp:corollary}}

Let $x \in \U{1}$ be arbitrary, and let $\bar{g} \in \mathcal{G}_x = \mathcal{F} \cap \mathcal{C}_x$ be an arbitrary sequence. The subgroup $F_{\bar{g}}$ of $\mathrm{Iso}(\U{1})$ is then algebraically free and dense in $\mathrm{Iso}(\U{1})$. Let $q = 1/2$ and consider the following open neighbourhood of the identity of $\mathrm{Iso}(\U{1})$: \[U_{x, q} = \{ g \in G : d(g(x), x) < q \}.\] 
It suffices to prove the following: \begin{claim}
The intersection $U_{x, q} \cap F_{\bar{g}}$ contains no non-trivial subgroup of $F_{\bar{g}}$.
\end{claim}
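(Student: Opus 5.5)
The claim follows by a direct argument by contradiction, using the conclusion of Theorem \ref{this:Polish:ssgp:thm} already established for $\bar{g} \in \mathcal{C}_x$. Suppose $K \leq F_{\bar{g}}$ is a subgroup with $K \subseteq U_{x,q}$, and suppose $K$ contains some element $h \neq e$. Since $K$ is a subgroup, every power $h^n$ with $n \in \Z$ lies in $K$, and hence in $U_{x,q}$. By the definition of $U_{x,q}$ with $q = 1/2$, this gives
\[
d(h^n(x),x) < \tfrac{1}{2} \quad \text{for all } n \in \Z,
\]
and therefore
\[
\sup_{n \in \Z} d(h^n(x),x) \leq \tfrac{1}{2}.
\]

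On the other hand, $\bar{g} \in \mathcal{C}_x$ and $h$ is a non-trivial element of $F_{\bar{g}}$. The preceding claim in the proof of Theorem \ref{this:Polish:ssgp:thm} therefore yields $\sup_{n} d(h^n(x),x) = 1$, which contradicts the bound above. Hence $K = \{e\}$, and $U_{x,1/2} \cap F_{\bar g}$ is a neighbourhood of the identity in $F_{\bar g}$ containing no non-trivial subgroup. This means $F_{\bar g}$ is $\nss$ in the subspace topology.

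One point needs care: whether the supremum is over $n \in \N$ or over $n \in \Z$. The sets $S_{x,q}$ are defined using $n \in \N$, so for $h \in F_{\bar g}\setminus\{e\}$ and every rational $q<1$ we obtain some $n \in \N$ with $d(h^n(x),x) > q$. Those positive powers already lie in $K$, so restricting to $n \in \N$ gives the contradiction. No negative powers are needed, and the discrepancy is harmless.

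To finish the corollary, the remaining step is that an $\ssgp$ group cannot be $\nss$ unless it is trivial. Suppose $F_{\bar g}$ were $\ssgp$. Then the neighbourhood $U_{x,1/2}\cap F_{\bar g}$ would contain a family $\mathcal{H}$ of subgroups whose union generates a dense subgroup of $F_{\bar g}$. By the claim, every member of $\mathcal{H}$ is trivial, so $\grp{\bigcup\mathcal H} = \{e\}$. Since $F_{\bar g}$ is non-trivial and Hausdorff, $\{e\}$ is closed and not dense, a contradiction. Thus no sequence in the comeager set $\mathcal{G}_x$ generates an $\ssgp$ subgroup, and the set of sequences generating $\ssgp$ subgroups is contained in the meager set $G^\N \setminus \mathcal{G}_x$.

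None of these steps is a real obstacle. The only place requiring attention is the $\N$ versus $\Z$ bookkeeping above, and it is resolved by working with positive powers.
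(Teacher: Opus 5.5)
Your argument is correct and is essentially the paper's own: take a non-trivial $h$ in a subgroup contained in $U_{x,1/2}$, note that all powers of $h$ then stay within distance $1/2$ of $x$, and contradict $\sup_n d(h^n(x),x)=1$ from the claim established for $\bar g\in\mathcal C_x$. Your additional remarks are accurate and only make explicit what the paper leaves implicit: the witnesses obtained from $S_{x,q}$ are positive powers, and a non-trivial Hausdorff NSS group cannot be SSGP.
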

\begin{proof}
By contradiction, assume there exists $h \in F_{\bar{g}} \setminus \{e\}$ such that $\grp{h} \subseteq U_{x, q}$. By item (b) of Theorem \ref{this:Polish:ssgp:thm}, there exists $n \in \Z$ such that $d(h^n(x),x) > q$. This implies that $h^n \not \in U_{x,q}$, which contradicts the selection of $h$. This proves that $U_{x, q}$ contains no non-trivial subgroups of $F_{\bar{g}}$.
\end{proof}
Since $F_{\bar g}$ carries the subspace topology inherited from
$\mathrm{Iso}(\U{1})$, the neighbourhood
$U_{x,q}\cap F_{\bar g}$ is an open neighbourhood of the identity in
$F_{\bar g}$ containing no non-trivial subgroup. We deduce that $F_{\bar{g}}$ is $\nss$ under the subspace topology. Hence $F_{\bar g}$ is $\nss$, and therefore cannot be $\ssgp$.
\qed

\subsection{The unbounded Urysohn space}

The arguments above extend directly to the full Urysohn space $\mathbb{U}$.

\begin{remark}
The boundedness assumption on $\U{1}$ was used only in two places:
\begin{enumerate}
    \item to ensure that the finite metric spaces constructed in Lemma \ref{lem:suffix_space} have diameter strictly less than $1$; and
    \item in Lemma \ref{lem:free_amalgamation}, where cross-distances were truncated by the operation
    \[
    d(a,b)=\min(1,d(a,x)+d(x,b)).
    \]
\end{enumerate}

For the classical Urysohn space $\mathbb{U}$, the truncation is unnecessary.
Indeed, if $(A,d_A)$ and $(B,d_B)$ are finite metric spaces intersecting
in a single point $x$, one may define
\[
d(a,b)=d_A(a,x)+d_B(x,b)
\]
for $a\in A$ and $b\in B$.
The resulting amalgam is again a metric space, and universality and ultrahomogeneity of $\mathbb{U}$ allow the remainder of the proof to proceed verbatim. Consequently, the analogues of Theorem
\ref{this:Polish:ssgp:thm} and Corollary
\ref{this:ssgp:corollary} hold for $\Iso(\mathbb{U})$, with
\[
\sup_{n\in\mathbb \N} d(h^n(x),x)=\infty
\]
replacing the condition
\[
\sup_{n\in\mathbb \N} d(h^n(x),x)=1.
\]
\end{remark}


\begin{thebibliography}{99}          

\bibitem{CG} W. Comfort and F. R. Gould, {\it Some classes of minimally almost periodic topological groups}, Appl. Gen. Topol.16 (2015), 141--165.

\bibitem{Gould} F. Gould, {\it On certain classes of minimally almost periodic groups}, Thesis (Ph.D.), Wesleyan University (2009),
136 pp. ISBN: 978-1109-22005-6.

\bibitem{Gould2} F. Gould, {\it An SSGP topology for $\Z^\omega$}, Topology Proc. 44 (2014), 389--392.

\bibitem{KR07} A. S. Kechris and C. Rosendal, \emph{Turbulence, amalgamation, and generic automorphisms of homogeneous structures}, Proc. Lond. Math. Soc. (3) 94 (2007), no. 2, 302--350.

\bibitem{SY_FreeSSGP}  D. Shakhmatov and V.H. Ya\~nez, {\it SSGP topologies on free groups of infinite rank}, Topol. Appl. 259 (2019), 384--410.

\bibitem{SipachevaUspenskii} O.V. Sipacheva and V.V. Uspenskij, {\it Free topological groups with no small subgroups, and Graev metrics}, Vestnik Moskov. Univ. Ser. 1. Mat. Mekh., 1987, no. 4, 21--24.

\bibitem{TentZiegler} K. Tent and M. Ziegler, {\it On the isometry group of the Urysohn space}, J. Lond. Math. Soc. 87 (2013), no. 1, 289--303.

\bibitem{Uspenskij1990} V. V. Uspenskij, {\it On the subgroup of isometries of the Urysohn universal metric space}, Comment. Math. Univ. Carolin. 31 (1990), 181--182.

\bibitem{YVH_Hilbert} V.H Ya\~nez, {\em Generalizing minimal almost periodicity via local compactness, NSS and Lie properties}, Rev. Real Acad. Cienc. Exactas Fis. Nat. Ser. A-Mat (2026) 120:2, pp: 1--25.

\end{thebibliography}
\end{document}